\documentclass[12pt,a4paper]{amsart}%

\usepackage{imakeidx}
\usepackage{amsmath}
\usepackage{fullpage}
\usepackage{comment}
\usepackage{MyCommA}
\usepackage[final]{pdfpages}
\usepackage{tikz-cd}

\makeindex

\newcommand{\Rami}[1]{{{#1}}}
\newcommand{\RamiA}[1]{{{#1}}}
\newcommand{\Eitan}[1]{{{#1}}}

\newcommand{\Nir}[1]{{{#1}}}

\newcommand{\nircommentsout}[1]{}

\newcommand{\NextVer}[1]{}

\newcommand{\noleft}{\left.\kern-\nulldelimiterspace}

\DeclareMathOperator{\Fl}{Fl}
\DeclareMathOperator{\linspan}{span}
\DeclareMathOperator{\divv}{div}

\begin{document}
	
\author[Aizenbud]{Avraham Aizenbud}
\address{Avraham Aizenbud,
Faculty of Mathematical Sciences,
Weizmann Institute of Science,
76100 Rehovot, Israel}
\email{aizenr@gmail.com}
\urladdr{https://www.wisdom.weizmann.ac.il/~aizenr/}

\author[Avni]{Nir Avni}
\address{Nir Avni,
Department of Mathematics, 
Northwestern University,
2033 N. Sheridan Rd,
Evanston IL, 60201}
\email{avni.nir@gmail.com}

\author[Gourevitch]{Dmitry Gourevitch}
\address{Dmitry Gourevitch,
Faculty of Mathematical Sciences,
Weizmann Institute of Science,
76100 Rehovot, Israel}
\email{dimagur@weizmann.ac.il}
\urladdr{https://www.wisdom.weizmann.ac.il/~dimagur/}

\author[Kazhdan]{David Kazhdan}
\address{David Kazhdan,
Einstein Institute of Mathematics,
Edmond J. Safra Campus, Givaat Ram,
The Hebrew University of Jerusalem,
Jerusalem 91904, Israel}
\email{david.kazhdan@mail.huji.ac.il}
\urladdr{https://math.huji.ac.il/~kazhdan/}

\author[Sayag]{Eitan Sayag}
\address{Eitan Sayag,
Department of Mathematics,
Ben Gurion University of the Negev,
P.O.B. 653,
Be'er Sheva 84105, Israel}
\email{eitan.sayag@gmail.com}
\urladdr{https://www.math.bgu.ac.il/~sayage}

\date{\today}

\keywords{Harish--Chandra characters, positive characteristic, Chevalley map, local integrability, cuspidal representations}
\subjclass{
22E50, 22E35, 20G25, 14L24}

\title{A proof of Harish-Chandra's integrability theorem for cuspidal representations of $\GL_n(\F_\ell((t)))$}

\maketitle
\begin{abstract}
\RamiA{Consider} the Chevalley map
$$
p:\fg\fl_n(F)\to (\fg\fl_n//\GL_n)(F),
$$
where $F=\F_\ell((t))$.
\RamiA{W}e show that \RamiA{the push forward via $p$ of} every smooth compactly supported measure on $\fg\fl_n(F)$ \RamiA{is} a measure whose density belongs to $L^q$ for every finite $q$. 

As a consequence, using the main result of \cite{AGKS2}, we obtain 
local integrability 
for Harish--Chandra's characters of irreducible cuspidal representations of $\GL_n(F)$.
\end{abstract}

\tableofcontents

\section{Introduction}
\subsection{Main results}

Let $\ell$ be a prime power and $n$ be an integer. Let $F=\F_\ell((t))$, and let $\ug=\fg\fl_n$.
Let $\uc:=\ug//\GL_n$ be the categorical quotient. Let $\g=\ug(F)$ and $\fc:=\uc(F)$.
Let $p:\ug\to\uc$ be the Chevalley map. \index{Chevalley map}
Concretely, identifying $\uc$ with $\mathbb A^n$, the map $p$ is given by
{
$$
p(A)=(p_1(A),\dots,p_n(A)),
$$
where
$$
\det(\bfx I-A)
=
\bfx^n+p_1(A)\bfx^{n-1}+\cdots+p_n(A).
$$
}

\RamiA{Given a smooth compactly supported measure $\mu$ on $\g$, the measure $p_*(\mu)$ is absolutely continuous with respect to a Haar measure $\mu_{\fc}$\index{$\mu_{\fc}$} on $\fc$. Thus the 
Radon-Nikodim derivative $\frac{p_*(\mu)}{\mu_{\fc}}$ exists.}

In this paper, we prove the following:

\begin{introtheorem}[\S\ref{sec:Pf.alm.an.frs}]\label{thm:intro-main}
For any smooth compactly supported measure $\mu$ on $\g$, \RamiA{we have }
$$
\frac{p_*(\mu)}{\mu_{\fc}}\in \bigcap_{1\le q<\infty}L^q(\fc)
$$
where $\mu_{\fc}$ is a Haar measure on $\fc$.
\end{introtheorem}

By \cite[\S13(5)]{AGKS2}, \Cref{thm:intro-main} implies the following:

\begin{introtheorem}\label{thm:B}
Let $F=\F_\ell((t))$.
Let $\pi$ be a \RamiA{(complex)} irreducible cuspidal smooth representation of $\GL_n(F)$. Then the Harish--Chandra's character of $\pi$ is represented by a locally integrable function on $\GL_n(F)$.
\end{introtheorem}
\subsection{Background}
Harish--Chandra proved the analogous theorems {for every reductive group ${\bf G}$} in characteristic zero (\cite{HC_VD}). {In \cite{HC_Q} (see also \cite{HC_int}) he showed that the local integrability of characters result holds for any irreducible representation.}

Throughout the years, analogs of Harish-Chandra's integrability theorem in positive characteristic were proven in some generality (See \cite{JL,Rod,CGH,AGKS2,Tsa}). For a detailed overview on the history of this problem before \cite{AGKS2} we refer the reader to \cite[\S 1.2.1]{AGKS2}. In \cite{AGKS2} we proved \Cref{thm:B} under the additional assumption of existence of resolution of singularities or under the assumption $\RamiA{\chara(F)}>n/2$ (see \cite[Theorem C and Proposition D,  and \S 12]{AGKS2}).  Since then an analog of Harish-Chandra's integrability theorem in positive characteristic was proven in \cite{Tsa} under some assumptions \RamiA{including the requirement} that  \RamiA{$\chara(F)$} does not divide the order of $W_{\bf G}$, the Weyl group of $\bf G$.

In \cite[Theorem D]{AGKS_jets} we proved \Cref{thm:intro-main} under the additional assumption of existence of resolution or the assumption $\RamiA{\chara(F)}>n/2$ (see \cite[Theorems D and E]{AGKS_jets}). 
We refer the reader to \cite[\S 1.3]{AGKS_jets} for a discussion of the background of this problem.
The approach of the current paper is different from the approach of  \cite{AGKS_jets}  and does not depend on any additional assumptions.

\subsection{Idea of the proof}
\RamiA{To prove \Cref{thm:intro-main} we reduce it to integrability of a specific function $|\cJ(A)|^{-s}$ on $\g$. We prove this integrability using an explicit embedded resolution of the $0$-locus of $\cJ$.}

\RamiA{In more details, t}he proof of \Cref{thm:intro-main} has three main steps.

\textbf{Step 1: Reduction to the Log-canonicity of the partial Jacobian of the Chevalley map.}

In \Cref{lem:last-column-affine} we observe that the Chevalley map in the case at hand is partially affine linear. Specifically, after decomposing a matrix $A\in\g$ into its first $n-1$ columns, parametrized by $Y=\operatorname{Mat}_{n\times(n-1)}(F)$, and its last column $v\in V=F^n$, the Chevalley map $p:\g\to\fc$ is affine linear in $v$ with fixed $y\in Y$, i.e. it has the form $p(y,v)=\cD(y)v+g(y)$, where $\cD(y)\in\operatorname{End}(\fc)$ is the partial Jacobian matrix of $p$. 
We then establish in \Cref{thm:Lq-criterion} a general criterion that controls the $L^q$ norm of the pushforward \RamiA{of smooth compactly supported measures} under such maps in terms of the integral of $|\det(\cD)|^{-s}$ over $Y$ for every $s<1$. 

\textbf{Step 2: Explicit formula for the Jacobian.}

The function $\cJ(A):=\det(\cD(A))$ is computed explicitly in \Cref{prop:Jacobian of p}: up to a  change of coordinates it is $\det(v, Av,\dots,A^{n-1}v)$ for some fixed nonzero vector $v\in F^n$. 

\textbf{Step 3: Log-canonicity of the Jacobian.}
By Step 1, we reduce to the statement that $|\cJ|^{-s}$ is locally integrable over $Y$ for every $s<1$. It is enough to show this local integrability over $\g$. 
We show this local integrability in \Cref{thm:logcanonical}. For this, we use the classical method \RamiA{of Bernstein-Gelfand and Atiyah} (See \cite{BG,Atiyah}). This method relies on resolution of singularities, which, in general, is not available in positive characteristic. However, in the case at hand, we found an explicit resolution that enables the \RamiA{implementation} of  \RamiA{that} method.

The key geometric ingredient is the construction of an embedded resolution  $\pi:\mathbf X\to\ug$ of the divisor given by $\cJ$.  Specifically, $\mathbf X$ parametrizes pairs $(\mathcal F,A)$ of a full flag  \RamiA{$$\mathcal F:=\left(
\{0\}=F_0\subset F_1 \subsetneq\cdots\subsetneq F_n=V\right)
,$$}
and a matrix $A\in \ug$ such that:
\begin{enumerate}
    \item 
    \RamiA{$F_1= \operatorname{span}(v)$}
where $v$ is the vector from Step 2.
\item 
The matrix $A\in\ug$ satisfies $AF_i\subseteq F_{i+1}$ for all $1\le i\le n-1$.    
\end{enumerate}
This construction is described and studied in \S\ref{sec:logcanonical}.
In local coordinates on $\mathbf X$, the pullbacks of $\cJ$ and the Jacobian of $\pi$ both become monomials.  \Cref{lem:modification} provides a description of the exponents of these monomials.
The local integrability 
of $|\cJ|^{-s}$
is then reduces to an elementary inequality on the exponents occurring in these monomials.

\subsection{Outlook}
We expect that the main results of \cite{AGKS2} \RamiA{(and in particular the fact that \Cref{thm:intro-main}  implies \Cref{thm:B})} are also valid for characters of \RamiA{all (not necessarily cuspidal)} irreducible representations. This is \RamiA{the} goal of our work in progress \cite{AGKS_eff,AGKS_int}. This would imply that \Cref{thm:B}  is also valid for  characters of \RamiA{all} irreducible representations.

Additionally, 
\RamiA{the work in progress \cite{DKS} is aimed at generalizing the results of \cite{AGKS2} to a wider class of groups including, at least, all groups of types B and C in odd characteristic.} 

The proof of \Cref{thm:intro-main} in the current paper heavily relies on the assumption $\ug= \fg\fl_n$. \RamiA{However, we have a strategy to generalize it to many other groups including all classical groups in odd characteristic. This is the goal of our work in progress \cite{AAGKS_Class}. Provided a successful completion of the above mentioned works, this will extend \Cref{thm:B} to classical groups of types B,C in odd characteristic.}

\subsection{Structure of the paper}
In \S \ref{Sec.Conv.} we fix our notation. 

In \S \ref{sec:Lq-criterion} we study the push of smooth measures under \Rami{partially affine linear maps}. We provide a simple criterion for such a pushed measure to be in $L^{q}$, namely \Cref{thm:Lq-criterion}. This is \Rami{the main} part of  Step 1 above.

In \S \ref{sec:J} we carry out Step 2 \Rami{and prove \Cref{prop:Jacobian of p}}.

In \S \ref{sec:logcanonical}
we carry out Step 3: we show that $\cJ$ is integrable by using an explicit \Rami{embedded} resolution of singularities and \Rami{by} controlling the relevant divisors. The key statement \Rami{here} is \Cref{lem:modification}. 

Finally, in \S \ref{sec:Pf.alm.an.frs} we deduce \Cref{thm:intro-main}.

\subsection{Acknowledgments}
We are deeply grateful to Mircea Mustaţă for a very useful e-mail correspondence.


During the preparation of this paper, A.A., D.G. and E.S. were partially supported by the ISF grant no. 1781/23, A.A. and N.A. were partially supported by BSF grant no. 2022193, N.A. was partially supported by NSF grant no. 2503233, and D.K. was partially supported by an ERC grant 101142781.

\section{Conventions}\label{Sec.Conv.}

Throughout the paper:

\begin{itemize}

\item $\ell$ is a prime power, $F:=\F_\ell((t))$, $O_F:=\F_\ell[[t]]$.
\index{$F$}
\index{$O_F$}
\index{$\ell$}
\item all algebraic varieties that we consider are defined over $F$.
\item Algebraic varieties are usually denoted by bold letters, while their sets of $F$-points are denoted by the corresponding usual letters. For example, $\mathbf X$ denotes an algebraic variety and $X=\mathbf X(F)$.

\item for Gothic letters we will usually use underlined letters instead of boldface.
\item We denote by $n$ a positive integer. \index{$n$}
\item $\ug:=\fg\fl_n$ and $\g:=\ug(F)$.
\index{$\ug$}
\index{$\g$}
\item We write $\uc=\ug//\GL_n$ for the categorical quotient and $\fc=\uc(F)$. We identify $\uc$ with the collection of monic polynomials of degree $n$ and with the affine space $\bA^n$. 
\index{$\uc$}
\index{$\fc$}
\item The Chevalley map is induced by the inclusion $F[\ug]^{\GL_{n}} \subset F[\ug]$ and is denoted by $p:\ug\to\uc$. We will treat it using the explicit formula given in the introduction:
$$
p(A)=(p_1(A),\dots,p_n(A)),
$$
where \index{$p$}
$$
\det(\bfx I-A)
=
\bfx^n+p_1(A)\bfx^{n-1}+\cdots+p_n(A).
$$
\item We use the same notation for algebraic maps and the induced maps on $F$-points.
\item A modification of an algebraic variety $\bfX$ is a proper birational map $p:\tilde \bfX\to \bfX$. 
\index{modification}
\item Depending on the context we will consider a divisor either as a subvariety or as a $\Z$-combination of irreducible varieties.
\index{Divisor}
\item For a function $f$ (or a section of a line bundle) on an algebraic variety $\bfX$ we denote by $\div(f)$ the corresponding divisor.
\index{$\div$}
\item An SNC divisor on an algebraic variety $\bfX$ is a strict normal crossings divisor (see \cite[\href{https://stacks.math.columbia.edu/tag/0BI9}{Definition 0BI9}]{SP}). 
\index{SNC divisor}

\item For an algebraic variety $\mathbf X$ over $F$, we consider the collection of its $F$-points as an $l$-space (in the sense of \cite{BZ}). If the variety is smooth we also consider this collection as an $F$-analytic  manifold.
\item For an $l$-space $X$,
we denote by $C_c^\infty(X)$ the space of locally constant compactly supported functions on $X$ and by $C^\infty(X)$ the space of locally constant functions on $X$.
\index{$C_c^\infty(X)$}
\item \RamiA{For an open set $U\subset F^d$, a  smooth measure on $U$ is a measure that can be written as $f\mu$ where $\mu$ is the Haar measure on $F^d$ and $f\in C^\infty(U)$.} For an $F$-analytic manifold $X$, a smooth measure on $X$ is a measure that 
\RamiA{is smooth on every local chart.}
\item For a \Nir{continuous function $\phi:X\to Y$ of topological spaces} and a compactly supported \RamiA{(Radon)} measure $\mu$ on $X$, we denote by $\phi_*(\mu)$ the pushforward measure on $Y$.
\index{$\phi_*(\mu)$}
\item If $\mu,\nu$ are measures on a space $X$, we denote by $\frac{\mu}{\nu}$ the corresponding Radon-Nikodim derivative. 
\item By an $L_{\RamiA{loc}}^q$ function on an $F$-analytic manifold we mean function which is $L_{\RamiA{loc}}^q$ w.r.t. (\RamiA{every}) smooth measure with full support. 
\end{itemize}

\section{Criterion for the $L^q$ property of pushforwards}\label{sec:Lq-criterion}

In this section we prove a general criterion that gives $L^q$ bounds for pushforwards of measures under maps which are affine linear in some of the variables.

\begin{theorem}\label{thm:Lq-criterion}
Let \RamiA{$\bfX,\bfV$} be finite-dimensional $F$-vector spaces, \RamiA{considered as algebraic varieties}, and let
$$
f:\RamiA{\bfX}\to \operatorname{End}(\RamiA{\bfV}),
\qquad
g:\RamiA{\bfX}\to \RamiA{\bfV}
$$
be \RamiA{morphisms of algebraic varieties}.
Define
$$
\phi:\RamiA{\bfX}\times \RamiA{\bfV}\to \RamiA{\bfV}
\quad \text{by} \quad
\phi(x,v)=f(x)v+g(x).
$$
\RamiA{Let $X:=\bfX(F)$ and let $V:=\bfV(F)$.}
Let $\mu_X,\mu_V$ be Haar measures on $X$ and $V$, respectively, and let $s\in(0,1)$.

Assume that for every $\sigma\in C_c^\infty(X)$, we have
$$
\int_X \sigma(x)|\det(f(x))|^{-s}\mu_X<\infty.
$$
Then for every $\rho\in C_c^\infty(X\times V)$, we have
$$
\frac{\phi_*(\rho\cdot (\mu_X\times\mu_V))}{\mu_V}
\in
L^{\frac1{1-s}}(V).
$$
\end{theorem}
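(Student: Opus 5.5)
The plan is to reduce, by linearity, to a product test function and then do the computation fiber by fiber over $X$. Since $\rho\in C_c^\infty(X\times V)$ is a finite linear combination of functions $\sigma(x)\tau(v)$ with $\sigma\in C_c^\infty(X)$, $\tau\in C_c^\infty(V)$, and since $L^{q}$ with $q=\frac1{1-s}$ is a vector space, it suffices to treat $\rho=\sigma\otimes\tau$. Moreover we may take $\sigma,\tau\ge0$, say indicator functions of compact open sets $K\subset X$ and $B\subset V$, because the pushforward is monotone in $\rho$.

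For fixed $x$ with $\det f(x)\neq 0$, the map $v\mapsto f(x)v+g(x)$ is an affine isomorphism of $V$. Hence the pushforward of $\tau\mu_V$ under it has density
$$
h_x(w)=|\det f(x)|^{-1}\,\tau\bigl(f(x)^{-1}(w-g(x))\bigr)
$$
with respect to $\mu_V$. The set $\{\det f=0\}$ is $\mu_X$-null unless $\det f\equiv 0$. In the latter case the hypothesis fails, since the integral of $|\det f|^{-s}$ would be infinite for any nonzero $\sigma\ge0$. So, by Fubini, the density of $\phi_*(\rho\,\mu_X\times\mu_V)$ is
$$
h(w)=\int_X \sigma(x)h_x(w)\,\mu_X.
$$

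Next, Minkowski's integral inequality gives
$$
\|h\|_{L^q}\le \int_X\sigma(x)\|h_x\|_{L^q}\,\mu_X .
$$
A change of variables computes $\|h_x\|_{L^q}$ exactly:
$$
\|h_x\|_{L^q}^q=|\det f(x)|^{-q}\int_V \tau\bigl(f(x)^{-1}(w-g(x))\bigr)^q\,\mu_V=|\det f(x)|^{1-q}\|\tau\|_q^q .
$$
Therefore
$$
\|h_x\|_{L^q}=|\det f(x)|^{-(q-1)/q}\|\tau\|_q=|\det f(x)|^{-s}\|\tau\|_q,
$$
because $(q-1)/q=s$ when $q=\frac1{1-s}$. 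Combining the two displays,
$$
\|h\|_{L^q}\le\|\tau\|_q\int_X\sigma|\det f|^{-s}\mu_X<\infty
$$
by hypothesis, which proves the theorem.

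The main point needing care is justifying Minkowski's inequality for the integral $h=\int\sigma h_x$ and the measurability of $(x,w)\mapsto h_x(w)$. Both are routine: $h_x(w)$ is a nonnegative jointly measurable function on $\{\det f\ne0\}\times V$, so Minkowski's inequality for nonnegative functions (or duality with $L^{q'}$ together with Tonelli) applies without integrability assumptions. Equality of $h$ with the Radon–Nikodym derivative follows by testing against $\psi\in C_c^\infty(V)$ using Tonelli and the affine change of variables. Beyond these checks, the argument is short and needs nothing from later in the paper.
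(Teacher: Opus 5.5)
Your proof is correct and follows essentially the same route as the paper: disintegrate the pushforward over $X$, compute the density $h_x$ of each fiberwise affine pushforward, use the change of variables to get $\|h_x\|_{L^q}=|\det f(x)|^{-s}\|\rho(x,\cdot)\|_{L^q}$, and conclude by Minkowski's integral inequality together with the hypothesis. The differences are minor. You first reduce to product test functions $\sigma\otimes\tau$, whereas the paper works with general $\rho$ and observes that $x\mapsto\|\rho(x,\cdot)\|_{L^q(V)}$ lies in $C_c^\infty(X)$. You also show that $\{\det f=0\}$ is null via the polynomial dichotomy, whereas the paper reads this off directly from the integrability hypothesis.
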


\begin{proof}[Idea of the proof]
For every fixed $x\in X$, the map $\phi_x(v)=f(x)v+g(x)$ is affine linear. When $f(x)$ is invertible, the pushforward of a smooth measure under $\phi_x$ has density whose $L^q$ norm is controlled by
$$
|\det(f(x))|^{-s},
\qquad
s=1-\frac1q.
$$

Integrating over $x$ and applying Minkowski's inequality reduces the theorem to the assumed integrability condition.
\end{proof}

\begin{proof}
Fix $\rho\in C_c^\infty(X\times V)$ and let $\nu:=\phi_*(\rho\cdot (\mu_X\times\mu_V))$. Then $\nu=\int_{{X}} \nu_x\,\mu_X$, where $$\nu_x:=(\phi_x)_*(\rho(x,\cdot)\mu_V).$$ Here we used the notation $\phi_x$ for the affine linear function $\phi(x,\cdot)$. More explicitly, for every $x\in X$, we define
$$
\phi_x:V\to V,
\qquad
\phi_x(v):=\phi(x,v)=f(x)v+g(x).
$$

Clearly, $$
\frac{\nu}{\mu_V}(w)
=
\int_X h_x(w)\mu_X.
$$
where $h_x:=\frac{\nu_x}{\mu_V}$.

By Minkowski's integral inequality,
$$
\left\|\frac{\nu}{\mu_V}\right\|_{L^q(V)}
\le
\int_X \|h_x\|_{L^q(V)}\mu_X.
$$

Let $U\subset X$ be the non-vanishing locus of $\det(f(x))$. Note that by the
integrability assumption
its complement is of zero measure.
Now for $x \in U$, the linear map $f(x)$ is invertible and in this case,
$$
h_x(w)
=
\rho(x,f(x)^{-1}(w-g(x)))
|\det(f(x))|^{-1}.
$$

Therefore, denoting $q:=\frac1{1-s}$, for any $x \in U$ we have:
$$
\|h_x\|_{L^q(V)}^q
=
\int_V
|\rho(x,f(x)^{-1}(w-g(x)))|^q
|\det(f(x))|^{-q}
\mu_V
=
|\det(f(x))|^{1-q}
\int_V |\rho(x,u)|^q\mu_V.
$$

Since $1-q=-sq$, it follows that
$$
\|h_x\|_{L^q(V)}
=
|\det(f(x))|^{-s}
\|\rho(x,\cdot)\|_{L^q(V)}.
$$

Therefore
$$
\left\|\frac{\nu}{\mu_V}\right\|_{L^q(V)}
\le
\int_X
|\det(f(x))|^{-s}
\|\rho(x,\cdot)\|_{L^q(V)}
\mu_X.
$$

The function $x\mapsto \|\rho(x,\cdot)\|_{L^q(V)}$ belongs to $C_c^\infty(X)$. Hence, by assumption,
$$
\int_X
|\det(f(x))|^{-s}
\|\rho(x,\cdot)\|_{L^q(V)}
\mu_X
<
\infty.
$$

Thus
$$
\left\|\frac{\nu}{\mu_V}\right\|_{L^q(V)}
<
\infty,
$$
which proves that
$$
\frac{\phi_*(\rho\cdot (\mu_X\times\mu_V))}{\mu_V}
\in
L^{\frac1{1-s}}(V).
$$
\end{proof}

\section{The Jacobian of the Chevalley map with respect to a column: the function $\cJ$}\label{sec:J}

In this section, we find a formula for the Jacobian of the restriction of the Chevalley map to the last column.

We begin with the following simple observation.

\begin{lemma}\label{lem:last-column-affine}
The map $p:\ug\to\uc$ is affine linear when considered as a function of the last column, while all other columns are fixed.
\end{lemma}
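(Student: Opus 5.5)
The plan is to show that each coordinate $p_k(A)$ is an affine linear function of the last column $v$ of $A$, once the first $n-1$ columns are held fixed. It suffices to prove this for the polynomial $\det(\bfx I-A)$ viewed as a polynomial in $\bfx$: if its coefficients are affine linear in $v$, then so is each $p_k$.

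The key step is multilinearity of the determinant in the columns. Write $A=(a_1,\dots,a_{n-1},v)$. Then
$$\bfx I-A=(\bfx e_1-a_1,\dots,\bfx e_{n-1}-a_{n-1},\ \bfx e_n-v),$$
so only the last column of $\bfx I-A$ depends on $v$. Expanding by linearity in that column gives
$$\det(\bfx I-A)=\det(\bfx e_1-a_1,\dots,\bfx e_n)-\det(\bfx e_1-a_1,\dots,\bfx e_{n-1}-a_{n-1},v).$$

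The first term does not depend on $v$. The second term is linear in $v$, with coefficients in $F[\bfx]$ that depend polynomially on $a_1,\dots,a_{n-1}$. This can be seen from the Laplace expansion along the last column:
$$\det(\bfx e_1-a_1,\dots,\bfx e_{n-1}-a_{n-1},v)=\sum_i (-1)^{i+n}v_i M_i(\bfx),$$
where each minor $M_i(\bfx)$ is independent of $v$.

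Comparing coefficients of $\bfx^{n-k}$ on both sides then gives
$$p_k(A)=g_k(a_1,\dots,a_{n-1})+\sum_i c_{k,i}(a_1,\dots,a_{n-1})\,v_i$$
for polynomials $g_k$ and $c_{k,i}$. Taken over all $k$, this is exactly the form $p(y,v)=\cD(y)v+g(y)$ required in \Cref{thm:Lq-criterion}.

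No step here is a real obstacle. The only point needing care is that the linear part $\cD(y)$ depends only on the fixed columns, which is immediate from the Laplace expansion along the last column. The argument also works over any base ring, since it uses nothing beyond multilinearity of the determinant.
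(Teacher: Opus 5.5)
Your proof is correct and is essentially the paper's argument: both rest on the multilinearity of the determinant in the last column. The paper writes each coefficient $p_k(A)$ as a combination of minors of $A$, each of which is either linear in or independent of the last column; you instead apply column-linearity directly to $\det(\mathbf{x}I-A)$ over $F[\mathbf{x}]$ and then compare coefficients, which comes to the same thing.
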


\begin{proof} 
The coordinates of $p$ are the coefficients of the characteristic polynomial
$
\det(\mathbf{x}I-A).
$
Each coefficient is a combination of minors. Each minor that involves the last column depends on it linearly. The others are constant in this column. This proves the assertion.
\end{proof}

In the following, $V=F^n$ and ${\bf e}_1,\ldots,{\bf e}_n$ is the standard basis of $V$.
\begin{definition} \label{def:D and J} For $A\in \mathfrak{g}$, define a map $\mathcal{D}(A):V \rightarrow \mathfrak{c}$\index{$\cD$} by
\[
\mathcal{D}(A)(v)=\left. \frac{d}{d\RamiA{s}}\right|_{\RamiA{s}=0} p(A+\RamiA{s}v{\bf e}_n^T)
\]
and let $\mathcal{J}(A)=\det(\mathcal{D}(A))$.\index{$\cJ$}
\end{definition} 

The main result of this section is the following:

\begin{proposition} \label{prop:Jacobian of p} $\mathcal{J}(A)=\RamiA{(-1)^n}\det({\bf e}_n^T,{\bf e}_n^TA,\ldots,{\bf e}_n^TA^{n-1})$.
\end{proposition}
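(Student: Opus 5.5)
The plan is to compute $\mathcal{D}(A)$ explicitly through the adjugate of $\bfx I-A$, and then remove the lower-order terms by unitriangular row operations. Throughout, identify $\fc$ with $F^n$ via the coordinates $(p_1,\dots,p_n)$ and set $p_0:=1$. Then $\mathcal{D}(A)$ is the $n\times n$ matrix whose $(k+1)$-st row is the linear functional $v\mapsto \frac{d}{ds}\big|_{s=0}p_{k+1}(A+sv{\bf e}_n^T)$, for $k=0,\dots,n-1$. The determinant in the statement is read as the determinant of the $n\times n$ matrix with rows ${\bf e}_n^T,{\bf e}_n^TA,\dots,{\bf e}_n^TA^{n-1}$.

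\textbf{Step 1: derivative of the characteristic polynomial.} For a rank-one perturbation, the matrix determinant lemma (or Jacobi's formula) gives the polynomial identity in $\bfx$
$$
\frac{d}{ds}\Big|_{s=0}\det(\bfx I-A-sv{\bf e}_n^T)=-{\bf e}_n^T\operatorname{adj}(\bfx I-A)\,v .
$$
This is valid over $F[\bfx]$, and we can verify it first for generic $\bfx$, where $\bfx I-A$ is invertible. Since $\det(\bfx I-A-sv{\bf e}_n^T)$ is affine in $s$ by \Cref{lem:last-column-affine}, this derivative is exactly its $s$-linear part.

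\textbf{Step 2: expand the adjugate.} Use the classical identity
$$
\operatorname{adj}(\bfx I-A)=\sum_{k=0}^{n-1}\bfx^{\,n-1-k}B_k,\qquad B_k:=\sum_{j=0}^{k}p_j(A)A^{k-j}.
$$
This follows by multiplying by $(\bfx I-A)$, comparing coefficients, and applying Cayley--Hamilton for the constant term. Comparing the coefficient of $\bfx^{n-1-k}$ on both sides of the identity in Step 1 shows that the $(k+1)$-st row of $\mathcal{D}(A)$ is $-{\bf e}_n^TB_k$.

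\textbf{Step 3: row reduction.} We have
$$
-{\bf e}_n^TB_k=-\Big({\bf e}_n^TA^k+\sum_{j=1}^{k}p_j(A)\,{\bf e}_n^TA^{k-j}\Big),
$$
so the row vectors ${\bf e}_n^TB_0,\dots,{\bf e}_n^TB_{n-1}$ are obtained from ${\bf e}_n^T,{\bf e}_n^TA,\dots,{\bf e}_n^TA^{n-1}$ by a lower unitriangular change of basis. Such a change does not affect the determinant. The overall sign $-1$ on each of the $n$ rows contributes $(-1)^n$, which gives the claimed formula.

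I do not expect a real obstacle here. The only points needing care are bookkeeping ones: making sure the coordinates of $\fc$ are ordered compatibly with $k$, and interpreting the determinant as taken over rows (transposing does not change its value anyway). The adjugate expansion in Step 2 is the one substantive identity, and it is standard.
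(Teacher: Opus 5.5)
Your proposal is correct and is essentially the paper's proof. Both arguments identify the rows of $\mathcal{D}(A)$ with $-\mathbf{e}_n^T$ times the coefficient matrices of $\operatorname{adj}(\mathbf{x}I-A)$, and then use that these coefficients are $A^k$ plus a combination of lower powers of $A$ to do a unitriangular row reduction. Your explicit $B_k=\sum_{j\le k}p_j(A)A^{k-j}$ is the paper's $C_{n-1-k}$; the paper gets the same triangularity directly from $(\mathbf{x}I-A)\operatorname{adj}(\mathbf{x}I-A)=\det(\mathbf{x}I-A)I$ without writing the coefficients out.

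One point in your favor: ordering the rows by $p_1,\dots,p_n$ gives exactly the stated sign $(-1)^n$. The paper's basis $1,\mathbf{x},\dots,\mathbf{x}^{n-1}$ lists the rows in reverse order, which introduces an extra factor $(-1)^{n(n-1)/2}$; this is harmless because only $|\mathcal{J}|$ is used later.
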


For a variable $\bf{x}$, the adjugate matrix $\adj({\bf{x}}I-A)$ is an $n$-by-$n$ matrix whose entries are polynomials of degree at most $n-1$ in $\bf{x}$. Define $C_0(A),\ldots,C_{n-1}(A)\in \End(V)$ by $\adj({\bf{x}}I-A)=\sum_{\RamiA{k=0}}^\RamiA{n-1} C_k(A){\bf{x}}^k$.\index{$C_k$} 

\begin{lemma} \label{lem:adjugate} $ $ \begin{enumerate}
\item $C_{n-1}(A)=I$.
\item For every $k<n-1$, $C_k(A)=A^{n-k-1}+\text{linear combination of $C_{k+1}(A),\ldots,C_{n-1}(A)$}$.
\end{enumerate} 
\end{lemma}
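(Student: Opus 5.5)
We derive both parts from the defining identity of the adjugate,
$$
({\bf x}I-A)\adj({\bf x}I-A)=\det({\bf x}I-A)\,I=\Big({\bf x}^n+\sum_{j=1}^n p_j(A){\bf x}^{n-j}\Big)I,
$$
viewed as an identity of polynomials in ${\bf x}$ with coefficients in $\End(V)$. We substitute $\adj({\bf x}I-A)=\sum_{k=0}^{n-1}C_k(A){\bf x}^k$ and compare coefficients of each power of ${\bf x}$.

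For part (1), compare the coefficients of ${\bf x}^n$. The left side contributes only $C_{n-1}(A)$, since ${\bf x}\cdot C_{n-1}(A){\bf x}^{n-1}$ is the only term of degree $n$. The right side contributes $I$. Hence $C_{n-1}(A)=I$.

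For part (2), compare the coefficients of ${\bf x}^{k+1}$ for $0\le k\le n-2$. This gives
$$
C_k(A)-AC_{k+1}(A)=p_{n-k-1}(A)\,I,
$$
so
$$
C_k(A)=AC_{k+1}(A)+p_{n-k-1}(A)\,C_{n-1}(A),
$$
where we used part (1) to write $I=C_{n-1}(A)$. We then prove by descending induction on $k$ the slightly stronger statement that $C_k(A)=A^{n-k-1}+\sum_{j>k}a_{k,j}C_j(A)$, with scalars $a_{k,j}$ depending on $A$.

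The base case is $k=n-2$. Here the recursion gives $C_{n-2}(A)=A+p_1(A)C_{n-1}(A)$, which has the required form.

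For the inductive step, assume the statement for $k+1$. Then
$$
AC_{k+1}(A)=A^{n-k-1}+\sum_{j>k+1}a_{k+1,j}\,AC_j(A).
$$
The key point is to rewrite each $AC_j(A)$ with $j\ge k+2$ using the recursion at index $j-1$. This gives $AC_j(A)=C_{j-1}(A)-p_{n-j}(A)C_{n-1}(A)$. Since $j-1\ge k+1>k$, both $C_{j-1}(A)$ and $C_{n-1}(A)$ are among $C_{k+1}(A),\ldots,C_{n-1}(A)$. Adding the term $p_{n-k-1}(A)C_{n-1}(A)$ from the recursion, we obtain $C_k(A)=A^{n-k-1}+$ a linear combination of $C_{k+1}(A),\ldots,C_{n-1}(A)$, which is part (2).

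The only delicate point is this inductive step, where each product $AC_j(A)$ must be re-expressed through the recursion so that only $C_{j'}(A)$ with $j'>k$ appear. Beyond that the argument is bookkeeping of indices. The coefficients of the linear combination are polynomials in the $p_j(A)$. This does not matter for the lemma, but may be convenient later when computing $\mathcal J$.
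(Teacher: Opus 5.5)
Your proof is correct and takes the same approach as the paper, which simply states that both claims follow from the identity $({\bf x}I-A)\adj({\bf x}I-A)=\det({\bf x}I-A)\, I$. Your coefficient comparison gives the recursion $C_k(A)=AC_{k+1}(A)+p_{n-k-1}(A)I$, and your descending induction then fills in the bookkeeping that the paper leaves implicit.
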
 

\begin{proof} Both claims follow from $({\bf{x}}I-A) \cdot \adj({\bf{x}}I-A)=\det({\bf{x}}I-A) \cdot I$.
\end{proof}

\begin{proof}[Proof of \Cref{prop:Jacobian of p}] Denote the vector space of polynomials in $\bf{x}$ of degree at most $n-1$ by $W$. The elements ${\bf 1},{\bf x},\ldots,{\bf x}^{n-1}$ form a basis of $W$. \Rami{By \Cref{lem:last-column-affine}} the function $v\mapsto p(A+\RamiA{v}{\bf e}_n^T)=\det({\bf x}I-A\RamiA{-}v{\bf e}_n^T)$ is affine \Rami{linear}, so 
\[
\mathcal{D}(A)(v)=\det \left( {\bf x}I-A\RamiA{-}v \bfe_n^T\right) - p(A)({\bf x})
\]
and $\mathcal{D}(A)({\bf e}_i)= \RamiA{-}\adj({\bf x}I-A)_{n,i}$. In the bases $\left\{ {\bf e}_i \right\}$ and $\left\{ {\bf x}^i \right\}$, the $(i,j)$ entry of the matrix representing $\mathcal{D}(A)$ equals  $\RamiA{-}C_{i}(A)_{n,j}$. This matrix is the matrix whose rows are $\RamiA{-}\bfe_n^TC_i(A)$. By Lemma \ref{lem:adjugate}, the determinant of $\mathcal{D}(A)$ is equal to the determinant of the matrix whose rows are $\RamiA{-\bfe_n^T,-\bfe_n^TA,\ldots, -\bfe_n^TA^{n-1}}$.

\end{proof}

\section{Log-canonicity of $\cJ$}\label{sec:logcanonical}

In this section we prove the following theorem.

\begin{theorem}\label{thm:logcanonical}
Let $s<1$ and let $\rho\in C_c^\infty(\g)$. Then
$$
\int_{\g}
|\cJ(x)|^{-s}\rho(x)\,dx
<
\infty,
$$
where $dx$ is a Haar measure on $\g$.
\end{theorem}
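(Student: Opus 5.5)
We would prove \Cref{thm:logcanonical} by the Bernstein--Gelfand--Atiyah method, applied to the explicit embedded resolution $\pi:\mathbf X\to\ug$ described in the introduction. Transposing (a linear automorphism of $\g$ preserving Haar measure), \Cref{prop:Jacobian of p} lets us replace $\cJ$ by $\cJ'(A)=\det(v,Av,\dots,A^{n-1}v)$ with $v=\bfe_n$ fixed. Let $\mathbf X$ be the variety of pairs $(\mathcal F,A)$, where $\mathcal F=(F_1\subsetneq\dots\subsetneq F_n=V)$ is a full flag with $F_1=\operatorname{span}(v)$ and $AF_i\subseteq F_{i+1}$ for $1\le i\le n-1$.

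\textbf{Basic properties of $\pi$.} We first check the following.
\begin{enumerate}
\item $\mathbf X\to\ug$ is proper, since the flags with $F_1$ fixed form a projective variety.
\item It is birational: on the open set where $v$ is cyclic for $A$ (that is, $\cJ'\neq0$), the only admissible flag is $F_i=\operatorname{span}(v,\dots,A^{i-1}v)$.
\item $\dim\mathbf X=n^2$. The flags form a space of dimension $\binom{n-1}{2}$, and the conditions $AF_i\subseteq F_{i+1}$ impose $\sum_{i=1}^{n-2}(n-1-i)=\binom{n-1}{2}$ linear conditions on $A$.
\end{enumerate}
Hence $\int_\g|\cJ'|^{-s}\rho\,dx=\int_X|\cJ'\circ\pi|^{-s}(\rho\circ\pi)\,|\mathrm{Jac}(\pi)|$. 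The set $\pi^{-1}(\operatorname{supp}\rho)$ is compact, so it suffices to bound the integral over finitely many charts.

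\textbf{Local coordinates.} The group $P=\operatorname{Stab}(v)\subset\GL_n$ acts transitively on the admissible flags. It acts compatibly on $\mathbf X$ and on $\ug$ by conjugation, and $\cJ'(gAg^{-1})=\det(g)\cJ'(A)$ for $g\in P$. Therefore we only need one standard chart, together with its $P(F)$-translates. Take the chart near the standard flag $F_i=\operatorname{span}(f_1,\dots,f_i)$, with $f_1=v$. It is parametrized by a unipotent $u$, lower triangular with respect to the basis $(f_i)$ and fixing $f_1$ (that is, $\binom{n-1}{2}$ coordinates), together with an upper Hessenberg matrix $H$ (that is, $n^2-\binom{n-1}{2}$ coordinates, including the subdiagonal entries $a_1,\dots,a_{n-1}$). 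The map is $\pi(u,H)=uHu^{-1}$. In this chart $A^{i}v\in a_1\cdots a_i\,uf_{i+1}+F_i$, so
$$\cJ'\circ\pi=\pm\prod_{i=1}^{n-1}a_i^{\,n-i},$$
which is a monomial.

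\textbf{The Jacobian of $\pi$ (main obstacle).} The key computation, which we expect to be the main difficulty, is $\det d\pi$. At $u=1$, the differential is $(\dot u,\dot H)\mapsto[\dot u,H]+\dot H$. So $\det d\pi$ is the determinant of the map $\dot u\mapsto[\dot u,H]$ modulo Hessenberg matrices. Here $\dot u$ ranges over the strictly lower entries of index $(j,k)$ with $j>k+1$, $k\ge2$. We would order these entries so that this map is triangular, with diagonal entries of the form $a_k$ or $-a_j$ in suitable positions. This should give
$$\mathrm{Jac}(\pi)=\text{unit}\cdot\prod_{i}a_i^{\,n-1-i}.$$
This is the expected count: it makes the relative canonical divisor $K_{\mathbf X/\ug}$ equal to $\sum_i(n-1-i)E_i$, while $\div(\cJ'\circ\pi)=\sum_i(n-i)E_i$, where $E_i=\{a_i=0\}$. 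Away from $u=1$ we would use the $P$-equivariance and the unipotent group structure to reduce to this case. If the triangularity fails, we would instead compute the exponent of each $E_i$ by restricting to a generic curve transverse to $E_i$.

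\textbf{Conclusion.} Locally the integrand becomes a smooth compactly supported function times $\prod_i|a_i|^{(n-1-i)-s(n-i)}$. Each exponent equals $-1+(1-s)(n-i)>-1$ for $s<1$. By Fubini and the elementary bound $\int_{O_F}|a|^{e}\,da<\infty$ for $e>-1$, each chart contributes a finite amount. Summing over the finitely many charts proves the theorem.
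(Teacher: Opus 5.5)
Your proposal is correct and is essentially the paper's proof. It uses the same modification $\mathbf X$, the same $P$-translated charts $(u,H)\mapsto uHu^{-1}$ (with $u$ unipotent lower triangular fixing $v$ and $H$ upper Hessenberg), the same monomials $\mathcal{J}'\circ\pi=\pm\prod_{i=1}^{n-1}a_i^{n-i}$ and $\mathrm{Jac}(\pi)=\pm\prod_{i=1}^{n-2}a_i^{n-1-i}$, and the same exponent check $(1-s)(n-i)-1>-1$. In the Jacobian step, note two corrections: $\dot u$ ranges over the entries $(j,k)$ with $j>k\ge 2$ (not $j>k+1$); and ordering by columns, as the paper does with the filtration $\mathfrak m_k$ and the matching $E_{j,k}\leftrightarrow E_{j,k-1}$ given by right multiplication by $S$, makes $\dot u\mapsto[\dot u,H]$ modulo Hessenberg matrices triangular with diagonal entry exactly $a_{k-1}$ at $E_{j,k}$ (the $-a_j$ contributions lie strictly higher in the filtration), which gives the exponent $n-1-i$ of $a_i$ immediately.
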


\begin{proof}[Idea of the proof]
By \Cref{prop:Jacobian of p}, it suffices to prove the local integrability of $|g|^{-s}$ where $g(A)=\det({\bf e}_1,A{\bf e}_1,\dots,A^{n-1}{\bf e}_1)$. The key ingredient is a modification $\pi:\mathbf X\to\ug$ constructed in \Cref{lem:modification}, where $\mathbf X$ parametrizes pairs $(\mathcal F, A)$ where $\cF$ is a full flag containing $\linspan(v)$ and $A$ is a  matrix of degree $\leq 1$ with respect to this flag. As the map $\pi$ is a modification, it suffices to check the integrability after pulling back to $\mathbf X(F)$. On $\mathbf X$, \Rami{the divisors of} both $\pi^*g$ and the Jacobian of $\pi$ \Rami{are SNC} divisors with explicit exponents: the $i$-th irreducible component $\mathbf D_i$ appears with multiplicity $i+1$ in $\operatorname{div}(\pi^*g)$ and multiplicity $i$ in $\operatorname{div}(\operatorname{Jac}(\pi))$. Working locally in the analytic topology near any point of $\mathbf X(F)$, the integrability of the pullback reduces to the \Rami{local integrability of $|x|^{-s}$} when $s<1$.
\end{proof}

In order to prove \Cref{thm:logcanonical} we construct an explicit monomialization of the function $g\in \mathcal{O}_{\ug}$ given by
\[
g(A)=\det({\bf e}_1,A{\bf e}_1,\ldots,A^{n-1}{\bf e}_1).
\]

\begin{definition} 
$ $
\begin{enumerate}
\item
$\Fl(V)$\index{$\Fl(V)$} is the flag variety of $V$, i.e., the variety parameterizing full flags
\[
0=F_0 \subseteq F_1 \subseteq \cdots \subseteq F_n=V,
\]
where $F_i \subseteq V$ is an $i$-dimensional subspace of $V$. 
\item $\Fl(V)_1 \subseteq \Fl(V)$ is the subvariety parameterizing flags $$F_0 \subseteq F_1 \subseteq \cdots \subseteq F_n$$ such that $F_1=\linspan({\bf e}_1).$\index{$\Fl(V)_1$}
\item $\mathbf{X} \subseteq \Fl(V)_1 \times \Rami{\ug}$ is the variety parameterizing pairs $((F_i),A)$ such that $AF_i \subseteq F_{i+1}$.\index{$\mathbf{X}$}
\end{enumerate}
\end{definition}

$\Fl(V)_1$ is isomorphic to the flag variety of $V/\linspan({\bf e}_1)$ and the projection $\phi : \mathbf{X} \rightarrow \Fl(V)_1$ is a vector bundle. In particular, $\mathbf{X}$ is irreducible and smooth. 

We now construct an affine atlas for $\bfX$.
\begin{notation}
Denote
\begin{enumerate}
    \item 
$\bfP:=\Stab_{\GL(V)}(\linspan({\bf e}_1))$. It acts on $\mathbf{X}$ by $g \cdot ((F_i),A)=((gF_i),g A g ^{-1})$.\index{$\bfP$}
\item 
$\bfN$ is the intersection of $\bfP$ and the group of unipotent lower-triangular matrices.  
\item 
$\mathcal{F}_{id}$\index{$\cF_{id}$} the standard flag 
\[
0 \subseteq \linspan({\bf e}_1) \subseteq \linspan({\bf e}_1,{\bf e}_2) \subseteq \cdots \subseteq \linspan({\bf e}_1,\ldots,{\bf e}_{n-1}) \subseteq V.
\]
\item 
$
\RamiA{\uv}:= \left\{ A\in \Rami{\ug} \mid A_{i,j}=0 \text{ if $i>j+1$}\right\}.
$
Note that 
 the fiber $\phi^{-1}(\mathcal{F}_{id})$ is identified with $\RamiA{\uv}$.\index{$\uv$}
\item
 $\varphi_{id}:\bfN \times \RamiA{\uv} \rightarrow \mathbf{X}$ is the map given by $\Rami{(u,A) \mapsto u \cdot (\mathcal{F}_{id},A)}.$\index{$\varphi_{id}$}
 \item 
 For every \Rami{$\RamiA{h} \in P:=\bfP(F)$, set
  $\varphi_{\RamiA{h}}:\RamiA{\bfN}\times \RamiA{\uv} \rightarrow \mathbf{X}$ to be the map defined by $\varphi_{\RamiA{h}}(u,A):=\RamiA{h}\cdot \varphi_{id}(u,A)$}.\index{$\varphi_{h}$} 
\end{enumerate}

\end{notation}
\Rami{
The following lemma is standard:
\begin{lemma}
    $ $
    \begin{itemize}
        \item For any $\RamiA{h}\in P$, the map $\varphi_{\RamiA{h}}$ is an open embedding of $\bfN \times \RamiA{\uv}$ into $\mathbf{X}$. 
        \item The collection  $\{\varphi_{\RamiA{h}}\}_{\RamiA{h}\in P}$ forms an atlas of $\mathbf{X}$.
    \end{itemize}
\end{lemma}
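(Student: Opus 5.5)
We first reduce to $h=\mathrm{id}$. Since $\bfP$ acts on $\mathbf X$ by automorphisms (it preserves $\linspan({\bf e}_1)$ and the condition $AF_i\subseteq F_{i+1}$), each $\varphi_h=h\circ\varphi_{id}$ is an open embedding as soon as $\varphi_{id}$ is one. So the first bullet reduces to showing that $\varphi_{id}:\bfN\times\uv\to\mathbf X$ is an open embedding.

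For this we use the vector bundle $\phi:\mathbf X\to\Fl(V)_1$. First, $\varphi_{id}$ is compatible with $\phi$: it covers the map $\bfN\to\Fl(V)_1$, $u\mapsto u\mathcal F_{id}$. Next, $\bfN$ is the lower-triangular unipotent group of $V/\linspan({\bf e}_1)$; more precisely, its elements fix ${\bf e}_1$ and are lower unipotent on $\linspan({\bf e}_2,\dots,{\bf e}_n)$. Under the identification $\Fl(V)_1\cong \Fl(V/\linspan({\bf e}_1))$, this orbit map is the standard open embedding of the opposite unipotent group onto the big Bruhat cell. That is the classical fact that $u\mapsto u\mathcal F_{id}$ is an isomorphism of $\bfN$ onto the open subset $\Omega$ of flags in general position with respect to the opposite standard flag; one can see it via row reduction or via $\bfN\cap \bfB=1$ together with a dimension count.

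Over $\Omega$ the bundle is trivialized by $(u,A)\mapsto (u\mathcal F_{id},uAu^{-1})$. Indeed, the fiber over $u\mathcal F_{id}$ is $u\,\uv\,u^{-1}$, because $AuF_i\subseteq uF_{i+1}$ is equivalent to $u^{-1}Au\in\uv$. Hence $\varphi_{id}$ is an isomorphism $\bfN\times\uv\to\phi^{-1}(\Omega)$, and $\phi^{-1}(\Omega)$ is open. The main (though standard) point to verify carefully is exactly this Bruhat-cell statement, namely that the orbit map of $\bfN$ is an isomorphism onto an open subset and not merely an injective morphism. We plan to handle it by writing explicit inverse coordinates: on $\Omega$, each $F_i$ has a unique basis of the form ${\bf e}_i+\sum_{j>i}c_{ji}{\bf e}_j$ modulo $F_{i-1}$, obtained by Gaussian elimination, and these $c_{ji}$ are regular functions on $\Omega$.

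For the second bullet we need the translates $h\,\phi^{-1}(\Omega)=\phi^{-1}(h\Omega)$, $h\in P$, to cover $\mathbf X$. It suffices that the translates $h\Omega$ cover $\Fl(V)_1$. Given a flag $\mathcal F\in\Fl(V)_1$, the Bruhat decomposition for $\GL(V/\linspan({\bf e}_1))$ provides an $h$ with $h^{-1}\mathcal F\in\Omega$. Concretely, we choose a basis adapted to $\mathcal F$ whose first vector is ${\bf e}_1$, and let $h\in P$ send the standard basis to it; then $h^{-1}\mathcal F=\mathcal F_{id}\in\Omega$. Since $F$ is infinite, this works on $F$-points and equally on $\bar F$-points after noting that $\Omega$ is defined over $F$. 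Alternatively, one can use Weyl group representatives, which lie in $P(F)$, to get finitely many charts covering the scheme.
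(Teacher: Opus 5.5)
Your proposal is correct. The paper gives no proof of this lemma; it only calls it standard. What you wrote is the standard argument:
\begin{itemize}
\item reduce to $h=\mathrm{id}$ via the $\bfP$-action;
\item identify $u\mapsto u\mathcal F_{id}$ with the big-cell embedding of the lower unipotent group of $\GL(V/\linspan({\bf e}_1))$;
\item use that the fiber of $\phi$ over $u\mathcal F_{id}$ is $u\,\uv\,u^{-1}$, so that $\varphi_{id}$ is an isomorphism onto $\phi^{-1}(\Omega)$;
\item cover $\mathbf X$ by $P$-translates.
\end{itemize}

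Two small points are worth tightening.

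\textbf{The open embedding.} Since $\operatorname{char}F>0$, the alternative you mention ($\bfN\cap\mathbf B=1$ plus a dimension count) only gives a bijective morphism onto an open set. In positive characteristic that need not be an isomorphism without a separability check such as $\operatorname{Lie}\bfN\cap\operatorname{Lie}\mathbf B=0$. So the explicit inverse is the route to use. It is cleanest to phrase it as follows. On $\Omega$, the space $F_i$ contains a unique vector ${\bf e}_i+\sum_{j>i}c_{ji}{\bf e}_j$, where $c_{ji}=p_{1,\dots,i-1,j}(F_i)/p_{1,\dots,i}(F_i)$ is a ratio of Pl\"ucker coordinates and hence regular on $\Omega$. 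Moreover $c_{j1}=0$ because $F_1=\linspan({\bf e}_1)$, so the resulting matrix $u$ lies in $\bfN$. This is more precise than your ``unique basis modulo $F_{i-1}$'' phrasing.

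\textbf{The covering.} To cover the variety and not just $\mathbf X(F)$, your Weyl-group alternative is the complete argument: the translates by permutation matrices fixing ${\bf e}_1$, which lie in $P$, already cover $\Fl(V)_1$ over any field. Your sentence about $\bar F$-points would need the Zariski density of $P$ in $\bfP$ to be made precise.
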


}
\begin{definition} For $i=0,\ldots,n-2$, let $\mathbf{D}_i \subseteq \mathbf{X}$ be the variety of pairs $((F_i),A)$ such that $A F_{\Rami{n-1-i}} \subseteq F_{\Rami{n-1-i}}$.\index{$\bfD_i$}
\Rami{Let $\mathbf{D} := \bigcup_{i=1}^{n-\RamiA{2}} \mathbf{D}_i$}.\index{$\bfD$}
\end{definition} 
\Rami{The following lemma is straightforward\RamiA{:}}
\begin{lemma}\label{lem:SNC} 
In the local charts $\varphi_{\RamiA{h}}$, \RamiA{the divisor} $\mathbf{D}_i$ is \RamiA{given by} the zero locus of the map $(u,A)\mapsto A_{\RamiA{n-i,n-1-i}}$. In particular, the $\mathbf{D}_i$ are prime divisors and $\mathbf{D}$ is \Rami{an SNC divisor}.
\end{lemma}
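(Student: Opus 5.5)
Since the $\bfP$-action preserves each $\mathbf D_i$, it suffices to compute in the single chart $\varphi_{id}$. The coordinate description there comes from the fact that $A\in\uv$ is upper Hessenberg. The prime-divisor and SNC statements then follow from a vector-bundle description of $\mathbf D_i$ together with the coordinate description.

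\textbf{Step 1: reduction to $\varphi_{id}$.} Since $\varphi_h=h\cdot\varphi_{id}$, it is enough to check that each $\mathbf D_i$ is $\bfP$-stable. Let $h\in\bfP$, write $k:=n-1-i$, and suppose $AF_k\subseteq F_k$. Then
$$(hAh^{-1})(hF_k)=hAF_k\subseteq hF_k,$$
so $h\cdot((F_j),A)\in\mathbf D_i$. Hence $\varphi_h^{-1}(\mathbf D_i)=\varphi_{id}^{-1}(\mathbf D_i)$.

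\textbf{Step 2: the chart $\varphi_{id}$.} A point $\varphi_{id}(u,A)=(u\mathcal F_{id},uAu^{-1})$ lies in $\mathbf D_i$ if and only if $uAu^{-1}uF_k\subseteq uF_k$. This is equivalent to $A\,\linspan(\bfe_1,\dots,\bfe_k)\subseteq\linspan(\bfe_1,\dots,\bfe_k)$. Because $A\in\uv$, we already have $A\bfe_j\in\linspan(\bfe_1,\dots,\bfe_{j+1})$. So the only possible obstruction is the $\bfe_{k+1}$-coordinate of $A\bfe_j$ for $j\le k$. This coordinate is $A_{k+1,j}$, which vanishes automatically for $j<k$, leaving only $A_{k+1,k}=A_{n-i,n-1-i}$.

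To get the statement scheme-theoretically, I will describe $\mathbf D_i$ as the zero locus of a section of a line bundle on $\mathbf X$. The composite
$$F_k\xrightarrow{A}F_{k+1}/F_k$$
is a section of the line bundle $\mathcal{H}om(F_k/F_{k-1},F_{k+1}/F_k)$, since $AF_{k-1}\subseteq F_k$ by the definition of $\mathbf X$. Its vanishing locus is $\mathbf D_i$. In the chart $\varphi_{id}$ this line bundle is trivialized by $\bfe_k\mapsto\bfe_{k+1}$, and the section becomes the function $A_{k+1,k}$. Thus $\mathbf D_i$ is cut out by the single coordinate $A_{n-i,n-1-i}$ with its reduced structure.

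\textbf{Step 3: prime divisors and SNC.} On $\bfN\times\uv$ the entries of $A$ together with the coordinates of $\bfN$ form a global linear coordinate system. For $i=0,\dots,n-2$ the coordinates $A_{n-i,n-1-i}$ are pairwise distinct coordinates of this system. Hence each $\mathbf D_i$ is locally a coordinate hyperplane, so it is smooth of codimension one, and any collection of them meets transversally. This gives that $\mathbf D$ is SNC.

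For irreducibility, I will use that $\mathbf D_i\to\Fl(V)_1$ is the subbundle of the vector bundle $\phi$ cut out by one nonzero linear condition on each fiber. It is therefore a vector bundle of corank one over the irreducible variety $\Fl(V)_1$, and hence irreducible. Consequently each $\mathbf D_i$ is a prime divisor.

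The only step needing care is the scheme-theoretic identification in Step 2: one must see the defining condition as a line-bundle section rather than just a set-theoretic condition. The line bundle $\mathcal{H}om(F_k/F_{k-1},F_{k+1}/F_k)$ handles this.
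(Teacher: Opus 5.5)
Your proof is correct and follows essentially the intended argument. The paper calls this lemma straightforward and gives no proof; the natural proof is your reduction to the chart $\varphi_{id}$ via $\mathbf{P}$-invariance of the $\mathbf{D}_i$, followed by reading off the single condition $A_{n-i,n-1-i}=0$ from the Hessenberg shape of $A$, and your line-bundle section and corank-one subbundle arguments supply the scheme-theoretic and irreducibility details the paper leaves implicit.
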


\begin{lemma} \label{lem:modification} Let $\pi : \mathbf{X} \rightarrow \Rami{\ug}$ be the projection. \begin{enumerate}
\item 
\RamiA{$\pi$ is proper, and t}he restriction of $\pi$ to $\mathbf{X} \smallsetminus \mathbf{D}$ is an open embedding. In particular, $\pi$ is a modification.
\item $\divv(\pi ^*(g))=\sum\limits_{i=0}^{n-2} (\Rami{i+1})\mathbf{D}_i$ 
\item Let $\omega_{\ug}$ be the invertible top form on ${\ug}$. Then $$\divv(\pi ^*( \omega_{\ug} ))=\sum_{i=1}^{n-2} i\mathbf{D}_i.$$
\end{enumerate} 
\end{lemma}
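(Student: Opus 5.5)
The plan is to prove the three parts in order. For parts (2) and (3) we work in the charts $\varphi_h$ and reduce to the identity chart using the $\bfP$-action.

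\textbf{Part (1).} The variety $\mathbf{X}$ is closed in $\Fl(V)_1\times\ug$, and $\Fl(V)_1$ is projective. Hence $\pi$, being the restriction of the projection, is proper.

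For the open-embedding claim, let $((F_j),A)\notin\mathbf{D}$, so $AF_j\not\subseteq F_j$ for $1\le j\le n-2$. Since $AF_j\subseteq F_{j+1}$ and $\dim F_{j+1}=\dim F_j+1$, we get $F_{j+1}=F_j+AF_j$. Inductively this gives $F_j=\linspan({\bf e}_1,A{\bf e}_1,\dots,A^{j-1}{\bf e}_1)$ for $j\le n-1$. So the flag is determined by $A$, and this includes $F_{n-1}$, which is why $\mathbf{D}_0$ need not be removed.

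Conversely, on the open set $\mathbf{U}\subseteq\ug$ where ${\bf e}_1\wedge A{\bf e}_1\wedge\dots\wedge A^{n-2}{\bf e}_1\neq 0$, this formula defines a morphism $\mathbf{U}\to\mathbf{X}\smallsetminus\mathbf{D}$ inverse to $\pi$. Therefore $\pi|_{\mathbf{X}\smallsetminus\mathbf{D}}$ is an isomorphism onto $\mathbf{U}$. Birationality follows since $\mathbf{X}$ is irreducible.

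\textbf{Part (2).} In the chart $\varphi_h$ we have $\pi(\varphi_h(u,A))=(hu)A(hu)^{-1}$, with $hu\in P$. For $k\in P$ with $k{\bf e}_1=\lambda{\bf e}_1$, one has $(kAk^{-1})^j{\bf e}_1=\lambda kA^j{\bf e}_1$. Hence $g(kAk^{-1})=\lambda^n\det(k)\,g(A)$, and so $\varphi_h^*\pi^*g$ equals an invertible function times $g(A)$ for $A\in\uv$.

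Write $a_k:=A_{k+1,k}$ for the subdiagonal entries. For $A\in\uv$, the vector $A^k{\bf e}_1$ lies in $\linspan({\bf e}_1,\dots,{\bf e}_{k+1})$ and has ${\bf e}_{k+1}$-coefficient $a_1\cdots a_k$. The matrix $({\bf e}_1,A{\bf e}_1,\dots,A^{n-1}{\bf e}_1)$ is therefore upper triangular, and
$$
g(A)=\prod_{k=1}^{n-1}a_1\cdots a_k=\prod_{k=1}^{n-1}a_k^{\,n-k}.
$$
By \Cref{lem:SNC}, $\mathbf{D}_i$ is cut out by $a_{n-1-i}$, and $k=n-1-i$ gives exponent $i+1$. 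This proves (2).

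\textbf{Part (3).} We compute the Jacobian of $\Phi(u,A)=uAu^{-1}$ on $\bfN\times\uv$; the factor $h$ only contributes the constant $\det\mathrm{Ad}(h)$. Left-translating in $\bfN$ and conjugating back by $u$ (both have unit Jacobian), it suffices to compute the determinant of
$$
(\xi,\delta A)\mapsto[\xi,A]+\delta A,
$$
where $\xi$ ranges over $\mathrm{Lie}(\bfN)$, the strictly lower triangular matrices with zero first column. Since $\delta A$ fills $\uv$, this equals, up to sign, the determinant of $\xi\mapsto[\xi,A]$ followed by projection onto the complementary coordinates $(i,j)$ with $i\ge j+2$.

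The dimensions match: $\xi_{p,q}$ with $p>q\ge2$ corresponds bijectively to the target $(p,q-1)$. The coefficient of $\xi_{p,q}$ in $[\xi,A]_{p,q-1}$ is $-A_{q,q-1}=-a_{q-1}$. The remaining contributions to $[\xi,A]_{p,q-1}$ are $\xi_{p,m}A_{m,q-1}$ with $m\le q-1$ (from $\xi A$, using $A\in\uv$) and $A_{p,m}\xi_{m,q-1}$ with $m\ge p-1$ (from $A\xi$). These should involve only entries $\xi_{p',q'}$ that come earlier in a suitable total order, for instance lexicographic by $(q'$ ascending$,\ p'$ descending$)$, so the linear map is triangular.

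Verifying this triangularity is the step I expect to need the most care. If it holds, the Jacobian is, up to a unit,
$$
\prod_{q=2}^{n-1}a_{q-1}^{\,n-q}=\prod_{k=1}^{n-2}a_k^{\,n-1-k}.
$$
Setting $k=n-1-i$ gives $\mathbf{D}_i$ with multiplicity $i$ for $1\le i\le n-2$, and $\mathbf{D}_0$ does not occur. This proves (3).
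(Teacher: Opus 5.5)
Your proposal is correct and follows essentially the paper's route: properness from the closed embedding $\mathbf{X}\subseteq \Fl(V)_1\times\ug$ with the Krylov flag as inverse on the good locus, the upper-triangular Krylov matrix for (2), and for (3) the reduction via $\mathrm{Ad}$-invariance to $\xi\mapsto \mathrm{pr}([\xi,A])$, where the triangularity you flagged does hold because every off-diagonal contribution you listed involves a $\xi$-entry in a column strictly smaller than $q$ --- exactly the paper's column filtration $\mathfrak{m}_k$, on whose graded pieces the map acts by $A_{k+1,k}$. Your choice of $\mathbf{U}$ via ${\bf e}_1,\dots,A^{n-2}{\bf e}_1$ is in fact the accurate image of $\mathbf{X}\smallsetminus\mathbf{D}$ (since $\mathbf{D}$ omits $\mathbf{D}_0$, whereas the paper's $\mathbf{U}$ uses ${\bf e}_1,\dots,A^{n-1}{\bf e}_1$), and your slips --- $\lambda$ instead of $\lambda^{-1}$ in part (2), and the sign of the coefficient $a_{q-1}$ --- are harmless.
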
 

\begin{proof} 
$ $
\begin{enumerate}
\item \RamiA{
Since $\mathbf{X}$ is Zariski closed in $\Fl(V)_1 \times \ug$, the map $\pi$ is projective.
Let $\mathbf{U} \subseteq \ug$ be the locus of $A\in \ug$ for which the vectors ${\bf e}_1,A{\bf e}_1,\ldots,A^{n-1}{\bf e}_1$ are linearly independent. Then $\pi(\mathbf{X} \smallsetminus \bfD)=\mathbf{U}$, $\mathbf{U}$ is Zariski open, and the map 
\[
A \mapsto \big( 0 \subseteq \linspan({\bf e}_1) \subseteq \linspan({\bf e}_1,A{\bf e}_1) \subseteq \cdots \subseteq \linspan({\bf e}_1,\ldots,A^{n-1}{\bf e}_1)=V ,A\big)
\]
is the inverse of $\pi$.}
\item In local coordinates, $\pi ^*(g)$ is given \Eitan{on} \RamiA{$\bfN\times \uv$} by the function 
\begin{align*}    
(\RamiA{x},A)& \mapsto  \det({\bf e}_1,\RamiA{x}A\RamiA{x} ^{-1} {\bf e}_1,\ldots,\RamiA{x}A^{n-1}\RamiA{x} ^{-1}{\bf e}_1)\\&=\RamiA{\det(x{\bf e}_1,xA{\bf e}_1,\ldots,xA^{n-1}{\bf e}_1)=}\det({\bf e}_1,A{\bf e}_1,\ldots,A^{\RamiA{n-1}}{\bf e}_1).
\end{align*}
For $A\in \mathfrak{v}\RamiA{:=\uv(F)}$, the matrix $B(A):=({\bf e}_1,A{\bf e}_1,\ldots,A^{\RamiA{n-1}}{\bf e}_1)$ is upper triangular with $B_{1,1}=1$, $B_{2,2}=A_{2,1}$ and, by induction, \RamiA{$$B_{i,i}=\prod_{j=1}^{i-1}A_{j+1,j}.$$}
Thus, \RamiA{$$\pi ^*(g)=\prod_{i=1}^n B_{i,i}=\prod_{i=1}^{n-1} A_{i+1,i}^{n-i}$$} and the claim follows.
\item Let
\[
\mathfrak{b}=\left\{ T\in \RamiA{\fg} \mid T_{i,j}=0\text{ if $i>j$} \right\} \subseteq \fv,
\]
\[
\mathfrak{m}=\left\{ T\in \RamiA{\fg} \mid T_{i,j}=0\text{ if $i \leq j+1$} \right\},
\]
\[
\mathfrak{n}=\left\{ T\in \RamiA{\fg} \mid T_{i,j}=0\text{ if $i=1$ or $j \leq i$} \right\},
\]
and let
\[
\text{$S$ be the matrix given by }S_{i,j}= \begin{cases} 1 & i+1=j \\ 0 & \text{else} \end{cases} .
\]
Then $\RamiA{\fg}=\mathfrak{v} \oplus \mathfrak{m}$, $\RamiA{N:=\bfN(F)}=1+\mathfrak{n}$, and right multiplication by $S$ is a linear isomorphism between $\mathfrak{m}$ and $\mathfrak{n}$.

\RamiA{In order to calculate $\div(\pi^*(\omega_\ug))$\Eitan{,} we compute the determinant of the differential of $\pi$ in local coordinates.}
In the local chart $\varphi_{id}$, \RamiA{the map} $\pi$ \RamiA{becomes} the map $(\RamiA{x},A)\mapsto \RamiA{x}  A \RamiA{x}^{\RamiA{-1}}$. Identifying the tangent space \RamiA{at a  point $x\in N$} with $\mathfrak{n}$ \RamiA{via the left multiplication by $x$}, the differential of $\pi$ at $(\RamiA{x},A)$ is the map $\mathfrak{n} \oplus \mathfrak{v} \rightarrow \RamiA{\fg}$ sending \RamiA{$(\eps,\delta)$ to $${x}[\eps,A]{x^{-1}}+{x}\delta\RamiA{x^{-1}}.$$} 
\Eitan{As $\det(Ad(x):{\fg} \to {\fg}) \equiv 1$, we have:}
\RamiA{
$\det(d_{(x,A)}\pi)=\det(d_{(1,A)}\pi)$.}

With the decompositions $\fn\oplus \fv$ and $\fm\oplus \fv$, the differential $\RamiA{d_{(1,A)}} \pi$ is block upper triangular and the diagonal block corresponding to $\mathfrak{v}$ is the identity. Thus, $\det(\RamiA{d_{(x,A)}}\pi)$ is the determinant of the map $T_A:\mathfrak{m} \rightarrow \mathfrak{m}$ given by $$T_A(\RamiA{Y})=\pr_{\mathfrak{m}}([\RamiA{Y}S,A]),$$ where $\pr_{\mathfrak{m}}:\Rami{\ug} \rightarrow \mathfrak{m}$ is the projection.

To compute $\det(T_A)$ note first that $T_{A+B}=T_A+T_B$. Define a filtration $\mathfrak{m} =\mathfrak{m}_1 \supseteq \mathfrak{m}_2 \supseteq \cdots \supseteq \mathfrak{m}_{n-1}=0$ by \[
\mathfrak{m}_k:=\left\{ T\in \mathfrak{m} \mid T_{i,j}=0\text{ if $j<k$} \right\}.
\]
Let $1 \leq k \leq n-1$. Then \begin{enumerate}
\item If $A\in \mathfrak{b}$ then $T_A(\mathfrak{m}_k) \subseteq \mathfrak{m}_{k+1}$. 
\item If $l\neq k$ then $T_{E_{(\RamiA{l+1,l})}}(\mathfrak{m}_k) \subseteq \mathfrak{m}_{k+1}$, \RamiA{where} $E_{(i,j)}$ is the $(i,j)$-elementary matrix.
\item $T_{E_{(\RamiA{k+1,k})}}(\mathfrak{m}_k)=\mathfrak{m}_k$ and acts on $\mathfrak{m}_k/ \mathfrak{m}_{k+1}$ as the identity.
\end{enumerate} 
Write $A=B+\sum A_{\RamiA{k+1,k}}E_{(\RamiA{k+1,k})}$ where $B\in \fb$. Then $$T_A=T_B+\RamiA{\sum_{k=1}^{n-1}} A_{\RamiA{k+1,k}}T_{E_{(\RamiA{k+1,k})}}$$ is  block upper triangular with respect to the filtration $\fm_k$ and acts \RamiA{by} the scalar $A_{\RamiA{k+1,k}}$ on $\mathfrak{m}_k/ \mathfrak{m}_{k+1}$.

Thus, $$\RamiA{\det(d_{(x,A)}\pi)=}\det(T_A)=\RamiA{\prod_{k=1}^{n-2}} (A_{\RamiA{k+1,k}})^{\dim(\fm_k/\fm_{k+1})}=\RamiA{\prod_{k=1}^{n-2}}(A_{\RamiA{k+1,k}})^{n-1-k}.$$ A similar computation holds in all other local charts, and the claim follows.
\end{enumerate} 
\end{proof} 
\begin{proof}[Proof of \Cref{thm:logcanonical}] By \Cref{prop:Jacobian of p} and a change of coordinates, we can replace the function $\mathcal{J}$ with the function $g(A):=\det({\bf e}_1,A{\bf e}_1,\ldots,A^{n-1}{\bf e}_1)$.

Let $\omega_\mathfrak{g}$ be the standard top differential form on $\mathfrak{g}$. The measure $| \omega_\mathfrak{g} |$ induced by $\omega_\mathfrak{g}$ is a Haar measure on $\mathfrak{g}$. Let $X:=\mathbf{X}(F)$. By \Cref{lem:modification}, the map $\pi:X \rightarrow \mathfrak{g}$ is proper, onto, and is a bijection between two conull sets. Thus,
\[
\int_{\mathfrak{g}} |g(A)|^{-s} \rho(A) dA =\int_{\mathfrak{g}} |g(A)|^{-s} \rho(A) | \omega_\mathfrak{g} | =\int_{X} | \pi ^*(g)(x) |^{-s} \rho(\pi(x)) | \pi ^* \omega_\mathfrak{g} |.
\]
Since $\rho$ is compactly supported and $\pi$ is proper, it is enough to show that the measure $| \pi ^*(g)(x) |^{-s} | \pi ^* \omega_\mathfrak{g} |$ is locally finite. 

Let $\xi \in X$. Define $J:=\left\{ j \mid \xi \in \mathbf{D}_j(F) \right\}$. Since\Rami{, by  \Cref{lem:SNC},} the $\mathbf{D}_j$ have normal crossings, there is a neighborhood $U$ of $\xi$ in $X$ and local analytic coordinates $x_1,\ldots,x_m$ centered at $\xi$ such that, for $j\in J$, the divisor $\mathbf{D}_j$ is locally defined by $x_j=0$. By \Cref{lem:modification}, there are nowhere zero analytic functions $u,u':U \rightarrow F$ such that 
\[
\pi ^*(g)=u \cdot \prod_{j\in J}x_j^{j+1}, \quad \pi ^* (\omega_\mathfrak{g}) =u'\cdot \prod_{j\in J}x_j^j \cdot dx_1 \wedge \cdots \wedge dx_m.
\]
Shrinking $U$ and rescaling, we can assume that $(x_1,\ldots,x_m)$ is a bijection between $U$ and $O_F^m$. \Eitan{Let}
$C\in \mathbb{R}$ \Eitan{be} 
such that $|u ^{-1} |, |u'|<C$\Eitan{. T}hen
\[
\int_U | \pi ^*(g)(x) |^{-s} | \pi ^* \omega_\mathfrak{g} |=\int_{O^m} u^{-s}u' \prod_{j\in J}|x_j|^{-s(j+1)+j} dx_1 \cdots dx_m \leq 
\]
\[
\leq C^{-s+1} \prod_{j\in J} \left( \int_{O_F} |x|^{-s(j+1)+j} dx \right) < \infty,
\]
where the last inequality is because $-s(j+1)+j>-1$.
\end{proof} 

\nircommentsout{
For the actual proof we start with some preparations.
\begin{lemma}
    Let $m\in\mathbb{N}$ and let $\mathfrak{b}$ be the collection of
    $m\times m$ matrices satisfying
    \[
        A_{ij}=0
        \qquad\text{whenever } i>j+1.
    \]
    Let $A\in\mathfrak{b}$.
    Let $\mathfrak{n}$ be the Lie algebra of lower triangular nilpotent matrices with vanishing first column.

    Identify $\mathfrak{n}$ with $\mathfrak{gl}_m/\mathfrak{b}$ by sending a matrix
    $X\in\mathfrak{n}$ to the class of the matrix $Y$ given by
    \[
        Y_{ij}=X_{i,j+1}\quad (j<m),
        \qquad
        Y_{i,m}=0.
    \]
    Let
    \[
        \operatorname{pr}:\mathfrak{gl}_m\to\mathfrak{n}
    \]
    be the projection. Define
    \[
        C:\mathfrak{n}\to\mathfrak{n},
        \qquad
        C(x)=\operatorname{pr}([A,x]).
    \]
    Put
    \[
        d_m(A)=\det(C).
    \]

    Then
    \[
        d_m(A)=(-A_{21})^{m-2}d_{m-1}(A'),
    \]
    where $A'$ is the submatrix obtained from $A$ by erasing its first row and first column.
\end{lemma}

\begin{proof}
    Let
    \[
        W=\{x\in\mathfrak{n}\mid x_{i2}=0 \text{ for all } i\}.
    \]
    Thus $W$ consists of matrices in $\mathfrak{n}$ whose second column vanishes.

    We first show that $W$ is $C$-stable. Let $x\in W$.
    Since the second column of $x$ vanishes, the second column of $Ax$
    also vanishes.

    Since $A\in\mathfrak{b}$, the only possibly nonzero entries in the
    second column of $A$ are $A_{12}$ and $A_{22}$. Hence
    \[
        (xA)_{i2}
        =
        x_{i1}A_{12}+x_{i2}A_{22}.
    \]
    Since every element of $\mathfrak{n}$ has vanishing first column and
    $x_{i2}=0$, we get
    \[
        (xA)_{i2}=0.
    \]
    Therefore the second column of $[A,x]=Ax-xA$ vanishes, and thus
    \[
        C(x)\in W.
    \]
    Hence $W$ is $C$-invariant.

    By deleting the first row and first column, we identify $W$ with the
    corresponding space attached to the matrix $A'$. Under this
    identification, the restriction $C|_W$ is precisely the operator
    associated with $A'$. Therefore
    \[
        \det(C|_W)=d_m(A').
    \]

    We now compute the induced operator on $\mathfrak{n}/W$.
    Every class in $\mathfrak{n}/W$ is represented uniquely by a matrix of
    the form
    \[
        x=
        \begin{pmatrix}
            0 & 0 \\
            0 & y
        \end{pmatrix},
    \]
    where
    \[
        y=
        \begin{pmatrix}
            0 & 0 & \cdots & 0 \\
            v_1 & 0 & \cdots & 0 \\
            \vdots & \vdots & \ddots & \vdots \\
            v_{m-2} & 0 & \cdots & 0
        \end{pmatrix}.
    \]
    Thus the quotient $\mathfrak{n}/W$ is identified with the space of such
    matrices $y$.

    Write
    \[
        A=
        \begin{pmatrix}
            A_{11} & r \\
            a & A'
        \end{pmatrix},
    \]
    where
    \[
        r\in F^{1\times(m-1)},
        \qquad
        a\in F^{(m-1)\times 1}.
    \]

    Then
    \[
        Ax=
        \begin{pmatrix}
            0 & ry \\
            0 & A'y
        \end{pmatrix},
        \qquad
        xA=
        \begin{pmatrix}
            0 & 0 \\
            ya & yA'
        \end{pmatrix}.
    \]
    Hence
    \[
        [A,x]
        =
        \begin{pmatrix}
            0 & ry \\
            -ya & A'y-yA'
        \end{pmatrix}.
    \]

    After applying the projection $\operatorname{pr}$ and passing modulo
    $W$, only the first column  contributes. So we get the map $y\mapsto -A_{2,1}y$. Its determinant is $(-A_{2,1})^{m-2}$.  This implies the assertion.
\end{proof}
\begin{cor}\label{cor:detC}
    Let $C$ be as in the lemma. Then $$\det(C)=\eps \prod_{i=1}^{n-1}A_{i+1,i}^{n-1-i}$$ where $\eps\in \{1,-1\}$.
\end{cor}

\begin{proposition}    
\label{lem:resolution}
Let $0\neq v\in F^n$, and let $g:\g\to F$ be given by
$$
g(A)=\det(v,Av,\dots,A^{n-1}v).
$$
Let $\omega_\ug$ be the standard top form on $\ug$.
Let $\mathbf X$ be the collection of pairs $(\mathcal F,A)$ consisting of a full flag
$$
0=F_0\subset F_1\subset\cdots\subset F_n=F^n
$$
with $F_1=\operatorname{span}(v)$, and a matrix $A\in\g$ satisfying $AF_i\subseteq F_{i+1}$ for all $1\le i\le n-1$.

Let $\pi:\mathbf X\to\ug$ be the projection of algebraic varieties given by $$\pi(\mathcal F,A)=A$$

Then:
\begin{enumerate}
    \item $\mathbf X$ is smooth.\label{it:Xsm}
    
    \item \label{it:pimod} $\pi$ is a modification.
    
    \item \label{it:SNC} $\operatorname{div}(\pi^*g)$  and  $\operatorname{div}(\pi^*(\omega_\ug))$ are SNC divisors.
    
    \item \label{it:Di} One can choose irreducible smooth divisors
    $$
    \mathbf D_0,\dots,\mathbf D_{n-2}\subset \bfX
    $$
    such that
    \begin{equation}\label{eq:3a}
    \operatorname{div}(\pi^*g)
    =
    \sum_{i=0}^{n-2}(i+1)\mathbf D_i
    \end{equation}
    and
    \begin{equation}\label{eq:3b}        
    \operatorname{div}(\pi^*(\omega_\ug))
    =
    \sum_{i=0}^{n-2}i\mathbf D_i.
    \end{equation}
\end{enumerate}
\end{proposition}

\begin{proof}
$ $
\begin{enumerate}[Step 1.]
    \item $\bfX$ is smooth and irreducible (Proof of \eqref{it:Xsm}).\\
    We describe $\bfX$ as a vector bundle over a smooth base.

    Fix $v \in F^n$ nonzero. Let $\bfFl(v)$ denote the variety of full flags
    \[
      0 = F_0 \subset F_1 \subset \cdots \subset F_n = F^n
    \]
    with $F_1 = \operatorname{span}(v)$. This is a partial flag variety (the full flag variety of $F^n/\operatorname{span}(v)$), which is smooth, irreducible, and projective. The natural map $\bfX\to \bfFl(v)$ makes $\bfX$ the total space of a vector bundle over $\bfFl(v)$.
    Since $\bfFl(v)$ is smooth and irreducible, this implies that $\bfX$ is smooth and irreducible.
\item $\pi$ is proper.\label{step:prop}\\
    Consider the map
    \[
      \bfX \to \bfFl(v) \times \ug, \qquad (\cF, A) \mapsto (\cF, A).
    \]
    This is a closed immersion, since $\bfX$ is cut out of $\bfFl(v) \times \ug$ by the
    closed conditions $AF_i \subseteq F_{i+1}$ for $1 \le i \le n-1$.
    In particular it is proper. The projection $\bfFl(v) \times \ug \to \ug$ is proper
    because $\bfFl(v)$ is projective.
    Therefore $\pi$, being the composition
    \[
      \bfX \hookrightarrow \bfFl(v) \times \ug \to \ug,
    \]
    is proper as a composition of proper maps.

\item $\pi$ is birational (end of the proof of \eqref{it:pimod}).\\
    It suffices to exhibit a dense open subvariety $\ug^{\circ} \subseteq \ug$ over which
    $\pi$ is an isomorphism.

    Let $\ug^{\circ} \subseteq \ug$ be the open subvariety consisting of matrices $A$
    such that $v, Av, \ldots, A^{n-1}v$ are linearly independent, i.e.\ $g(A) \neq 0$.
    This is a dense open subset since $g$ is a nonzero regular function.

    For $A \in \ug^{\circ}$, the subspaces
    \[
      F_i := \operatorname{span}(v, Av, \ldots, A^{i-1}v), \qquad i = 1, \ldots, n,
    \]
    form a full flag with $F_1 = \operatorname{span}(v)$. Moreover, $AF_i \subseteq F_{i+1}$
    for all $1 \le i \le n-1$ by construction, so $(\cF, A) \in \bfX$.
    This defines a regular map $s: \ug^{\circ} \to \pi^{-1}(\ug^{\circ})$ which is
    clearly an inverse to $\pi|_{\pi^{-1}(\ug^{\circ})}$.

    Hence $\pi$ restricts to an isomorphism over $\ug^{\circ}$, so $\pi$ is birational.
    Together with properness (Step \ref{step:prop}), this shows that $\pi$ is a modification.
    
    \item Construction of $\bfD_i$.\\
    For $0 \le i \le n-2$, define
    \[
      \bfD_i := \{ (\cF, A) \in \bfX \mid AF_{n-1-i} \subset F_{n-1-i} \}.
    \]
    \item $\div(\pi^*(g))$ is supported in $\bfD:=\bigcup_{i=0}^{n-2}\bfD_i$.\\
    Recall that $g(A) = \det(v, Av, \ldots, A^{n-1}v)$.
    Let $(\cF, A) \in \bfX \setminus \bfD$. Then for all $0 \le i \le n-2$,
    $AF_{n-1-i} \not\subseteq F_{n-1-i}$, i.e.\ $A$ does not preserve any of the
    subspaces $F_1 \subset \cdots \subset F_{n-1}$ of the flag.
    In particular, $AF_1 \not\subseteq F_1$, i.e.\ $Av \notin \operatorname{span}(v)$,
    and more generally $A^jv \notin F_j$ for all $1 \le j \le n-1$.
    This means $v, Av, \ldots, A^{n-1}v$ are linearly independent, hence $g(A) \neq 0$.
    Therefore $\pi^*g$ is nowhere zero on $\bfX \setminus \bfD$, which means
    $\div(\pi^*g)$ is supported in $\bfD$.
\item $\div(\pi^*(\omega_\ug))$ is supported in $\bfE:=\bigcup_{i=1}^{n-2}\bfD_i$.\\
    It suffices to show that $\pi|_{\bfX \setminus \bfE}$ is an open embedding.

    Let $(\cF, A) \in \bfX \smallsetminus \bfE$. Then $A$ does not preserve any of the
    subspaces $F_1 \subset \cdots \subset F_{n-2}$, but may preserve $F_{n-1}$.    
    In particular, $v, Av, \ldots, A^{n-2}v$ are linearly independent. 
    
    Let $$\bfU=\{A\in\ug | v, Av, \ldots, A^{n-2}v \text{ are linearly independent}\}.$$ This is  a Zariski open set. For $A\in \bfU$    
    define 
    \[
      F(A)_i = \operatorname{span}(v, Av, \ldots, A^{i-1}v), \qquad 1 \le i \le n-1,
    \]
    and $F(A)_n=F^n$. This gives a section $s:\bfU\to \bfX \smallsetminus \bfE$ defined by $s(A):=(A,(0,F(A)_1,\dots,F(A)_n))$. It is easy to see that $s$ is the inverse of $\pi|_{\bfX \smallsetminus \bfE}$ when considered as a map to $\bfU$.
\item \label{step:const} Construction of local charts on $\bfX$.\\
    The group $\GL_n$ acts on $\bfX$ by $h\cdot(\cF,A)=(h\cF, hAh^{-1})$. For any
    $\cF_0\in \bfFl(v)$ we will construct an affine chart for $\bfX$ that together
    cover $\bfX$.
    For the construction assume $v=e_1$ and that $\cF_0$ is the standard flag on
    $V=F^n$. Let $\un_{\cF_0}$ be the Lie algebra of
    lower triangular nilpotent matrices that vanish on $e_1$. Let
    $\bfX_{\cF_0}=\{A\in\ug\mid(A, \cF_0)\in \bfX \}$. This is a linear space. Let
    $$\phi_{\cF_0}:\bfX_{\cF_0}\times \un_{\cF_0}\to \bfX$$ be given by
    $$\phi_{\cF_0}(A,\alpha)=(1+\alpha)\cdot (\cF_0,A).$$ A standard argument shows that that $\phi_{\cF_0}$
    is an open embedding.
    We repeat this construction for any flag $\cF\in \bfFl(v)$ and denote it by
    $\phi_{\cF}:\bfX_{\cF}\times \un_{\cF}\to \bfX$. Note that this depends on some
    choices that we make arbitrarily.
    It is easy to see that the maps $\phi_{\cF}$ form an open cover of $\bfX$ when
    we range over all $\cF\in \bfFl(v)$.
\item \label{step:SNC}$\bfD$ is an SNC divisor (end of the proof of \eqref{it:SNC}).\\
It is enough to show that for any $\cF_0\in\bfFl(v)$ the divisor  $\phi^{-1}_{\cF_0}(\bfD)$ is an SNC divisor.
It is easy to see that 
$$\phi^{-1}_{\cF_0}(\bfD_i)= \{ (A,\alpha) \in \bfX_{\cF}\times \un_{\cF} \mid A(\cF_0)_{n-1-i} \subset (\cF_0)_{n-1-i} \}.$$
WLOG assume $v=e_1$ and $\cF_0$ is the standard flag. 
So $$\bfX_{\cF}=\{A\in \ug|A_{ij}=0 \text{ for any } i>j+1\}. $$ 
Now, $\phi^{-1}_{\cF_0}(\bfD_i)$ is cut out by the equation $A_{n-i,n-1-i}=0.$
This implies the assertion.

\item Computation of $\div(\pi^*(g))$ in the local charts.\\
    We work in the local chart $\phi_{\cF_0}:\bfX_{\cF_0}\times \un_{\cF_0}\to \bfX$
    as in Step \ref{step:const}. In this chart a point $(A,\alpha)$ maps to $$((1+\alpha)\cF_0,(1+\alpha)A(1+\alpha)^{-1}),$$
    so
    \[
      \pi^*g(A,\alpha) = g((1+\alpha)A(1+\alpha)^{-1}) = \det(v, (1+\alpha)A(1+\alpha)^{-1}v, \ldots,
      ((1+\alpha)A(1+\alpha)^{-1})^{n-1}v).
    \]
    Since $v=e_1$ is fixed by $(1+\alpha)$ (as $\alpha$ vanishes on $e_1$), we have
    $(1+\alpha)^{-1}v=v$, and therefore
    \[
      ((1+\alpha)A(1+\alpha)^{-1})^k v = (1+\alpha)A^k(1+\alpha)^{-1}v = (1+\alpha)A^kv.
    \]
    Hence
    \[
      \pi^*g(A,\alpha) = \det((1+\alpha)v, (1+\alpha)Av, \ldots, (1+\alpha)A^{n-1}v)
      = \det(1+\alpha)\cdot \det(v, Av, \ldots, A^{n-1}v).
    \]
    Since $\det(1+\alpha)=1$ (as $\alpha$ is nilpotent), we obtain
    \[
      \pi^*g(A,\alpha) = \det(v, Av, \ldots, A^{n-1}v) = g(A).
    \]
    Now we compute $g(A)$ for $A\in\bfX_{\cF_0}$. Since $v=e_1$, we have $A^ke_1\in F_{k+1}$, and
    the $(k+1)$'s component of $A^ke_1$ is
    \[
      \prod_{j=0}^{k-1} A_{j+2,j+1},
    \]
    i.e.\ the product of the subdiagonal entries $A_{21}, A_{32},\ldots, A_{k+1,k}$.
    Therefore
    \[
      \det(e_1, Ae_1, \ldots, A^{n-1}e_1) = \prod_{k=1}^{n-1} A_{k+1,k}^{n-k}
      = \prod_{i=0}^{n-2} A_{n-i,n-1-i}^{i+1}.
    \]
    Since $\bfD_i$ is cut out by $A_{n-i,n-1-i}=0$ in this chart (Step \ref{step:SNC}), we conclude
    \[
      \div(\pi^*g) = \sum_{i=0}^{n-2}(i+1)\bfD_i,
    \]
    as required.
    \item Computation of $\div(\pi^*(\omega_\ug))$ in the local charts (end of the
    proof of (4)).\\
    We work in the local chart $\phi_{\cF_0}:\bfX_{\cF_0}\times \un_{\cF_0}\to \bfX$
    as in Step \ref{step:const}. Again  assume $v=e_1$ and $\cF_0$ is the standard flag. Fix standard forms $\omega_{\bfX_{\cF_0}\times \un_{\cF_0}}$ and $\omega_{\ug}$ on $\bfX_{\cF_0}\times \un_{\cF_0}$ and $\ug$. We would like to compute $\cR:=\frac{(\pi\circ \phi_{\cF_0})^*(\omega_{\ug})}{\omega_{\bfX_{\cF_0}\times \un_{\cF_0}}}$.

    Let $\bfN_{\cF_0}:=1+\un_{\cF_0}$ and consider it as an algebraic group. Since we can identify $\bfN_{\cF_0}$ with $\un_{\cF_0}$, we obtain an action $\bfN_{\cF_0}$  on the target and the source of $\phi_{\cF_0}$ making $\phi_{\cF_0}$ equivariant. Thus it is enough to compute $\div(\phi_{\cF_0}^*\pi^*(\omega_\ug))$ on $\bfX_{\cF_0}\times 0$. Let $A_0\in \bfX_{\cF_0}$. We have 
$$d_{(A_0,0)}(\pi \circ \phi_{\cF_0})(A,\alpha)=[\alpha,A_0]+A.$$
After identifying $T_{(A_0,0)}\bfX_{\cF_0}\times \un_{\cF_0}$ with  $$T_{\pi(\phi_{\cF_0}((A_0,0)))}(\ug)$$ using their standard bases.
we have $$\cR(A_0,0)=\det(d_{(A_0,0)}(\pi \circ \phi_{\cF_0})).$$
Let $pr:\ug\to\ug/\bfX_{\cF_0}$ be the projection. it is easy to see that 

$$\det(d_{(A_0,0)}(\pi \circ \phi_{\cF_0}))=\det(pr\circ d_{(A_0,0)}(\pi \circ \phi_{\cF_0})|_{0\times \un_{\cF_0}}).$$

So, by \Cref{cor:detC} we have 

$$\cR(A_0,0)=\det(d_{(A_0,0)}(\pi \circ \phi_{\cF_0}))=\det(pr\circ d_{(A_0,0)}(\pi \circ \phi_{\cF_0})|_{0\times \un_{\cF_0}})=\eps\prod (A_0)_{i+1,i}^{n-1-i},$$ where $\eps$ is a sign.
    This proves \eqref{it:Di}.
\end{enumerate}

\end{proof}

We can now prove the main theorem of the section.

\begin{proof}[Proof of \Cref{thm:logcanonical}]
By \Cref{prop:J-formula}, after multiplying by a nonzero scalar and applying the linear automorphism $A \mapsto A^t$, it suffices to prove the analogous theorem with $\cJ$ replaced by the function
$$
g(A)=\det(v,Av,\dots,A^{n-1}v)
$$
for some fixed nonzero vector $v\in F^n$.

Let $\pi:\mathbf X\to\ug$ be the modification of algebraic varieties constructed in \Cref{lem:resolution}.
Since $\pi$ is a modification, it suffices to show that $\pi^*(|g|^{-s}\rho\,dx)$ is integrable on $X:=\mathbf X(F)$ for every $\rho\in C_c^\infty(\g)$.

Since $\rho$ is compactly supported, we can work locally on $X$ in the analytic topology. By \Cref{lem:resolution}, the algebraic divisors $\operatorname{div}(\pi^*g)$ and $\operatorname{div}(\operatorname{Jac}(\pi))$ are both \Rami{SNC} divisors on $\mathbf X$, with irreducible components $\mathbf D_0,\dots,\mathbf D_{n-2}$ satisfying
$$
\operatorname{div}(\pi^*g)=\sum_{i=0}^{n-2}(i+1)\mathbf D_i,
\qquad
\operatorname{div}(\operatorname{Jac}(\pi))=\sum_{i=0}^{n-2}i\mathbf D_i.
$$

Let $\xi\in X$ be any point. Let $J\subseteq\{0,\dots,n-2\}$ be the set of indices such that $\xi\in D_j:=\mathbf D_j(F)$ if and only if $j\in J$. Since $\operatorname{div}(\pi^*g)$ is an \Rami{SNC} divisor, there exist local analytic coordinates $x_1,\dots,x_m$ centered at $\xi$ such that for each $j\in J$ the divisor $D_j$ is locally defined by $\{x_j=0\}$, and the remaining coordinates do not appear in either divisor. In these coordinates,
$$
\pi^*(g) = u\prod_{j\in J} x_j^{j+1},
\qquad
\operatorname{Jac}(\pi) = u'\prod_{j\in J} x_j^{j},
$$
where $u$ and $u'$ are analytic units (nowhere zero) near $\xi$.

Pulling back the measure $|g|^{-s}\,dx$ via $\pi$, the local expression becomes
$$
|\pi^*(g)|^{-s}\cdot|\operatorname{Jac}(\pi)|\,dx_1\cdots dx_m
=
|u|^{-s}|u'|\prod_{j\in J}|x_j|^{j-s(j+1)}\,dx_1\cdots dx_m.
$$

Since $|u|^{-s}|u'|$ is bounded and bounded away from zero near $\xi$, local integrability reduces to the condition
$$
j - s(j+1) > -1
\qquad\text{for all } j\in J.
$$
This simplifies to $(j+1)(1-s)>0$. Since $j\geq 0$ and $s<1$, this inequality holds for every $j\in J$.

Therefore the pullback measure is locally integrable at every point $\xi\in X$. Since $\pi$ is proper, it follows that $|g|^{-s}$ is locally integrable on $\g$, which proves the theorem.
\end{proof}
}
\section{Proof of the main result}\label{sec:Pf.alm.an.frs}

We now combine the results of the previous sections.

\begin{proof}[Proof of \Cref{thm:intro-main}]
Write $\g=Y\times V$, where $Y=\operatorname{Mat}_{n\times(n-1)}(F)$ parametrizes the first $n-1$ columns of a matrix and $V=F^n$ parametrizes the last column.
Let $i:Y\hookrightarrow \g$ be the embedding $i(y)=(y,0)$. Identify $V$ with $\fc$.

By  \Rami{\Cref{lem:last-column-affine}}, the map $p:\g\to\fc$ has the form
$$
p(y,v)=f(y)v+g(y),
$$
where $f:Y\to\operatorname{End}(\fc)$ and $g:Y\to\fc$ are regular maps.

By the definition of $\cD$ in \Cref{def:D and J} we have $f=i^*(\cD)$. Since $\cD$ does not depend on the last column, it follows that $\cD=\pr_Y^*(f)$, where $\pr_Y:\g=Y\times V\to Y$ is the projection. Clearly, $\cJ=\pr_Y^*(\det(f))$ is the function on $\g$ studied in \S \ref{sec:J}.
Let $s<1$ and let $\rho\in C_c^\infty(Y)$. Choose $\eta\in C_c^\infty(V)$ such that $\int_V \eta(v)\,dv=1$. Define $\widetilde\rho(y,v):=\rho(y)\eta(v)$. Then $\widetilde\rho\in C_c^\infty(\g)$ and 
by \Cref{thm:logcanonical}, the integral
$$
\int_{\g}
|\cJ(x)|^{-s}\widetilde\rho(x)\,dx
$$
is finite.

Using \RamiA{the equality} $\cJ=\pr_Y^*(\det(f))$, we obtain
$$
\int_Y
|\det(f(y))|^{-s}\rho(y)\,dy
=\int_{\g}
|\cJ(x)|^{-s}\widetilde\rho(x)\,dx < \infty
$$


Therefore the assumptions of \Cref{thm:Lq-criterion} are satisfied. Hence for every $\phi\in C_c^\infty(\g)$, and for any $s < 1$, the measure $p_*(\phi\,dx)$ has density in $L^{\frac1{1-s}}(\fc)$.

Since every finite $q\ge1$ can be written in the form $q=\frac1{1-s}$ with $s<1$, we conclude that
$$
\frac{p_*(\phi\,dx)}{\mu_{\fc}}
\in
\bigcap_{1\le q<\infty}L^q(\fc).
$$

As every smooth compactly supported measure on $\g$ is of the form $\phi\,dx$ for some $\phi\in C_c^\infty(\g)$, the proof of \Cref{thm:intro-main} is complete.
\end{proof}
\begingroup
  \let\clearpage\relax
  \let\cleardoublepage\relax 
  \printindex
\endgroup
\bibliographystyle{alpha}
\bibliography{Ramibib}
\end{document}